\documentclass[a4paper,reqno,11pt]{amsart}

\newfont{\cyr}{wncyr10 scaled 1100}

\usepackage[left=2.7cm,right=2.7cm,top=3.5cm,bottom=3cm]{geometry}

\usepackage{amsthm,amssymb,amsmath,amsfonts,mathrsfs,amscd}
\usepackage{mathtools}
\usepackage{extarrows}
\usepackage[latin1]{inputenc}
\usepackage[all]{xy}
\usepackage{latexsym}
\usepackage{longtable}
\usepackage{xcolor}
\usepackage{comment}
\usepackage[shortlabels]{enumitem}
\usepackage{setspace}
\setdisplayskipstretch{}
\usepackage{multirow}
\usepackage{xcolor,colortbl}
\usepackage{changepage}
\usepackage{float}
\usepackage{tikz-cd}
\usepackage[colorlinks=true,
            linkcolor=black,
            citecolor=black,
            urlcolor=black]{hyperref}
\usepackage[backend=biber,style=alphabetic,maxnames=99,minnames=99, sorting=nyt]{biblatex}

\DeclareFieldFormat[article]{title}{#1}
\DefineBibliographyStrings{english}{
  in = {}
}
\AtBeginBibliography{\footnotesize}

\theoremstyle{plain}
\newtheorem{theorem}{Theorem}[section]
\newtheorem{corollary}[theorem]{Corollary}
\newtheorem{lemma}[theorem]{Lemma}
\newtheorem{proposition}[theorem]{Proposition}

\theoremstyle{definition}
\newtheorem{definition}[theorem]{Definition}

\newtheorem{examplewr}[theorem]{Example}

\theoremstyle{remark}
\newtheorem{obswr}[theorem]{Observation}
\newtheorem{remarkwr}[theorem]{Remark}

\definecolor{Gray}{gray}{0.85}
\definecolor{LightCyan}{rgb}{0.88,1,1}

\newcolumntype{g}{>{\columncolor{Gray}}c}
\newcolumntype{y}{>{\columncolor{LightCyan}}c}
\newcolumntype{o}{>{\columncolor{pink}}c}

\newenvironment{remark}{\begin{remarkwr}\begin{upshape}}{\end{upshape}\end{remarkwr}}

\newcommand{\Z}{\mathbb{Z}}

\newcommand{\A}{\mathbb{A}}

\newcommand{\PP}{\mathbb{P}}

\newcommand{\fp}{\mathfrak{p}}

\makeatletter
\newsavebox{\@brx}
\newcommand{\llangle}[1][]{\savebox{\@brx}{\(\m@th{#1\langle}\)}%
  \mathopen{\copy\@brx\kern-0.5\wd\@brx\usebox{\@brx}}}
\newcommand{\rrangle}[1][]{\savebox{\@brx}{\(\m@th{#1\rangle}\)}%
  \mathclose{\copy\@brx\kern-0.5\wd\@brx\usebox{\@brx}}}
\makeatother

\begin{document}

\include{thebibliography}

\title[A note on cubical Bloch--Levine cycle complexes]{A note on cubical Bloch--Levine cycle complexes}
\author{Peter Xu}

\begin{abstract}
We check that Levine's simplicial--cubical comparison argument for Bloch's cycle complexes also works over an arbitrary DVR. As a result, the sheaf of cubical Bloch cycle complexes computes motivic cohomology for smooth schemes over Dedekind bases.
\end{abstract}

\maketitle
\tableofcontents

\section{Introduction}

In this short note, we show (Theorem \ref{thm:main}) that the cubical and simplicial versions of the Bloch--Levine cycle complexes are quasi-isomorphic over a DVR (or more generally, a semi-local PID where all residue fields are either finite or all infinite). This was shown by Levine over fields \cite[\S4]{L1}, and the arguments are largely the same - the only technical changes amount to some bookkeeping of fibers and residue fields, and a few applications of the Chinese remainder theorem. 

These quasi-isomorphisms come from a zig-zag which which sheafifies over a general base, and so show that the \emph{sheaf} of cubical Bloch--Levine cycles computes the same cohomology groups as the simplicial version treated by Levine in \cite{L2} (which is known to coincide with modern definitions of motivic cohomology for smooth schemes, and has many good properties).

Generally speaking, the main historical interest in cubical cycle complexes was for the natural cubical product, lacking in the simplicial setting. Over a non-field, this product cannot be defined in a homotopy coherent way (in contrast to modern approaches to motivic cohomology) because of an issue with rigidity of cycles in special fibers causing improper intersections, which is perhaps why to date the material of this present article has not been written down. 

However, we have found that in some concrete applications (e.g. our article \cite{X} constructing group cocycles valued in motivic multiple toric polylogarithm classes), it is useful to be able to write cubical cycles to construct motivic cohomology classes and relations between them over general bases: most or all of these could likely be translated into simplicial language by naive simplicial subdivision of cubes, but this results in a less natural formulation and would require additional checking of proper intersections with the newly-introduced simplicial faces. 

In loc. cit., we freely used sheaves of cubical Bloch complexes over a Dedekind base while remarking Levine's original proof should generalize in a straightforward way to non-fields, but decided to write out this note to remove any lingering doubts. We hope others may also find this modest technical exposition helpful if ever encountering similar situations.

\subsection*{Acknowledgements}

Thanks to Thomas Geisser for replying to questions, Tess Bouis for encouraging me to write this short note, and Romyar Sharifi for pointing out the gap in the literature.

\section{Recall of two Bloch--Levine constructions}

Let $R$ be a semi-local PID with fraction field $K$, maximal ideals $\fp_1,\dots,\fp_r$ and corresponding residue fields $k_1,\ldots, k_r$. For simplicity, we will assume either that all residue fields are infinite, or all finite.\footnote{ This assumption is almost surely superfluous, but it makes the descent argument of Section \ref{section:descent} simpler, and we are unaware of any interesting use cases with a mix of finite and infinite residue fields.} Let $X$ be a smooth $R$-scheme of pure relative dimension $d$. Let $\Box^n=(\A^1)^n$ be the algebraic $n$-cube, with codimension-$1$ faces given by the closed subschemes $\{t_i=0\}$ and $\{t_i=1\}$ for various $i$, and correspondingly let 
\[
\Delta^n=\mathrm{Spec}\,R[t_0,\dots,t_n]/(\textstyle\sum_j t_j-1)
\]
be the algebraic $n$-simplex, with codimension-$1$ faces given by $\{t_i=0\}$. The \emph{faces} of $\Box^n$ or of $\Delta^n$ are the intersection of codimension-$1$ faces. We recall briefly the definition of the Bloch--Levine cycle complexes, in general position with respect to some closed subschemes:

\begin{definition}\label{def:complexes}
Let $s$ be a finite set of closed subschemes of $X$, each flat over $R$, with $X\in s$. Let $z^q_s(X,n)$ (resp. $z^q_s(X,n)^c$) be the subgroup of the group of codimension-$q$ cycles in $X\times\Delta^n$ (respectively $X\times\Box^n$), generated by integral closed subschemes $Z$ meeting $S\times_R F$ properly (i.e. with correct codimension) for every $S\in s$ and every face $F$. 
\end{definition}

Under the alternating sum of simplicial face maps (respectively cubical face maps, after taking the non-degenerate quotient as in \cite[\S3]{L1}) these form homological complexes $z^q_s(X,\bullet)$ and $z^q_s(X,\bullet)^c$. When $s=\{X\}$, we omit $s$ from the notation. Both complexes are contravariantly functorial for flat morphisms and covariantly functorial for proper morphisms. 

We recall now the main geometric ingredients of Levine's simplicial-cubical comparison over fields \cite[\S4]{L1}. These constructions are base-agnostic, and go through exactly as over fields. In particular, the main ``prism cycle'' construction and associated properties all work identically over $\Z$:

\begin{lemma}[{\cite[Lemma 4.1]{L1}}]\label{lem:Wn}
Let $W_n\subseteq\Box^{n+1}\times\PP^1$ be the subscheme defined by 
\[
T_0(1-t_n)(1-t_{n+1})=T_0-T_1,
\]
where $T_0,T_1$ are homogeneous coordinates on $\PP^1$, let $\pi_n: W_n\to\Box^n$ and $p_n:\Box^{n+1}\times\PP^1\to\Box^{n+1}$ be the maps of loc. cit., and for $Z\in z^q(X\times\Box^n)$ set $W^X_n(Z)=p_{n*}\big(\pi_n^{*}Z\big)$. Then $\pi_n$ is flat of relative dimension one, $W^X_n(-)$ preserves properness of intersections, and 
\[
W^X_n(Z)\cdot \{t_{n+1}=0\}=Z=W^X_n(Z)\cdot \{t_n=0\}.\]
\end{lemma}

As in loc. cit., we also have the cubical-simplicial complex $z^q_s(X,m,n)$ of cycles on $X\times\Box^m\times\Delta^n$ with the combination of conditions. Under the cubical differential $d'$ and the simplicial differential $d''$, this is a double complex whose total complex we will simply denote by $\mathrm{Tot}_X$ for short. It has a pair of augmentations
$$\epsilon':\mathrm{Tot}_X\to z^q_s(X,\bullet)^c,\;\;\; \epsilon'':\mathrm{Tot}_X\to z^q_s(X,\bullet)$$
and the spectral sequence degeneration calculation of \cite[Theorem 4.7]{L1} applies verbatim: 

\begin{proposition}\label{prop:reduction}
If the rows $(z^q_s(X,\bullet,n),d')$ and columns $(z^q_s(X,m,\bullet),d'')$ are acyclic for $m,n\ge1$, then $\epsilon',\epsilon''$ are quasi-isomorphisms. 
\end{proposition}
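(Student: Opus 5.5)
The plan is a standard double-complex argument, with the augmentations viewed as the edge maps out of the boundary row and column. Set up the augmented double complex $C_{m,n}$ for $m,n\ge0$ by taking $C_{m,n}=z^q_s(X,m,n)$. Note that $z^q_s(X,0,n)=z^q_s(X,n)$ and $z^q_s(X,m,0)=z^q_s(X,m)^c$, since $\Box^0=\Delta^0=\mathrm{Spec}\,R$, and the proper-intersection conditions agree. The augmentation $\epsilon'$ is then projection onto the column $n=0$, and $\epsilon''$ is projection onto the row $m=0$; the precise form is as in \cite[Theorem 4.7]{L1}, and I would write $\mathrm{Tot}_X$ as the total complex of the $m,n\ge0$ part and $\epsilon'$, $\epsilon''$ as the projections to the two axes. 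To show $\epsilon''$ is a quasi-isomorphism, it suffices to show that its kernel, the total complex of the subcomplex with $m\ge1$, is acyclic. Symmetrically, for $\epsilon'$ it suffices that the total complex of the part with $n\ge1$ is acyclic.

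For the first claim, filter the subcomplex $\{m\ge1\}$ by the simplicial degree $n$, giving columns indexed by $n$. The associated spectral sequence has $E^1$ term computed by the $d'$-homology of the rows $z^q_s(X,\bullet,n)$ restricted to $m\ge1$. Here I will need the hypothesis in the exact form used: acyclicity of the rows $(z^q_s(X,\bullet,n),d')$ should mean the reduced row complex is exact in degrees $m\ge1$. The $n=0$ row is allowed to be non-acyclic, and the hypothesis is stated for $n\ge1$. So I must check carefully which rows enter. The kernel of $\epsilon'$ involves the rows with $n\ge1$, and these are acyclic by hypothesis. Hence its $E^1$ page vanishes, and the total complex is acyclic. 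The spectral sequence converges because the double complex is first-quadrant, so each total degree involves finitely many terms.

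The same argument with the roles exchanged, using the columns $z^q_s(X,m,\bullet)$ for $m\ge1$, shows that the kernel of $\epsilon''$ is acyclic. Then the long exact sequence of $0\to\ker\to\mathrm{Tot}_X\to\text{(edge complex)}\to0$ gives the quasi-isomorphisms. Surjectivity of the projections is clear because they are projections onto direct summands of the total complex in each degree. The augmentations are chain maps only up to the sign conventions of \cite{L1}, and I would adopt those signs verbatim.

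The main obstacle is bookkeeping, not mathematics. I need to match the degree conventions so that the hypothesis ``acyclic for $m,n\ge1$'' is exactly what kills the $E^1$ page of each kernel. In particular, I must confirm whether acyclicity refers to the full row including the $m=0$ term or to the truncated one, and then adjust the kernel description accordingly. Nothing specific to the base $R$ enters, since the argument is pure homological algebra on the double complex of abelian groups.
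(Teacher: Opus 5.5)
There is a genuine error in how you set up the comparison maps, although the homological core of your argument is the right one (it is the argument behind \cite[Theorem 4.7]{L1}).

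Since $d'$ lowers $m$ and $d''$ lowers $n$, the edges $C_{\bullet,0}=z^q_s(X,\bullet)^c$ and $C_{0,\bullet}=z^q_s(X,\bullet)$ are \emph{subcomplexes} of $\mathrm{Tot}_X$. The complementary parts $\{m\ge1\}$ and $\{n\ge1\}$ are not closed under the total differential. For $x\in z^q_s(X,1,k-1)$, $dx$ has the component $\pm d'x=\pm(\partial_1^0x-\partial_1^1x)\in z^q_s(X,0,k-1)$, which is nonzero in general. So the projection onto the $m=0$ column kills $x$ but not $dx$. It is not a chain map, no sign convention repairs this, and the ``kernel'' you want to prove acyclic is not a subcomplex. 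The same failure occurs for $\epsilon'$, via $d'':z^q_s(X,k-1,1)\to z^q_s(X,k-1,0)$.

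The edge maps that are chain maps are the inclusions $z^q_s(X,\bullet)^c\hookrightarrow\mathrm{Tot}_X\hookleftarrow z^q_s(X,\bullet)$. This is how $\epsilon',\epsilon''$ have to be read, whatever the direction of the arrows in the display, and they still give a zig-zag of quasi-isomorphisms. What must be shown acyclic are the \emph{quotient} complexes $\mathrm{Tot}(C_{\bullet,\ge1})$ and $\mathrm{Tot}(C_{\ge1,\bullet})$.

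Once this is set up correctly, the filtration is forced, and it also settles your truncation worry. Filter $\mathrm{Tot}(C_{\bullet,\ge1})$ by the subcomplexes $\bigoplus_{1\le n\le p}C_{\bullet,n}$, which are closed because $d''$ lowers $n$. The graded pieces are the full rows $(z^q_s(X,\bullet,n),d')$ for $n\ge1$, including their $m=0$ terms, and these are acyclic by hypothesis. The filtration is finite in each total degree, so the quotient is acyclic. Symmetrically, filter $\mathrm{Tot}(C_{\ge1,\bullet})$ by $m$; the graded pieces are the full columns for $m\ge1$. The long exact sequences of $0\to\text{edge}\to\mathrm{Tot}_X\to\text{quotient}\to0$ then finish the proof.

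Your first paragraph instead filters the $\{m\ge1\}$ part by $n$ and uses rows restricted to $m\ge1$. This cannot work: restricting an acyclic row $(C_{\bullet,n},d')$ to $m\ge1$ leaves $H_1\cong C_{0,n}$, which is nonzero in general. The correct pairing is rows with the $\{n\ge1\}$ quotient and columns with the $\{m\ge1\}$ quotient.
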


The acyclicity of the rows and columns is \cite[Lemma 4.6]{L1}, and only depends on the following two homotopy properties:
\begin{itemize}
\item (HC) The projection $p: X\times\A^1\to X$ induces a quasi-isomorphism 
\[
p^*: z^q_s(X,\bullet)^c\to z^q_{p^\bullet s}(X\times\A^1,\bullet)^c.\]
\item (HS) The map $p^*: z^q_s(X,\bullet)\to z^q_{p^\bullet s}(X\times\A^1,\bullet)$ is a quasi-isomorphism.
\end{itemize}

Under these assumptions, the proofs of \cite[Theorem 4.5]{L1} and \cite[Theorem 2.1]{B} apply verbatim, since the homotopies there are produced from $W^X_n$ (respectively the simplicial prism) and make sense even over $\Z$. The homotopy properties \emph{(HC)} and \emph{(HS)} we can deduce from the weak moving lemmas, which over fields are \cite[Proposition 4.4]{L1} and \cite[Lemma 2.3]{B}, the remaining steps being base-independent. We prove this as Theorem \ref{thm:wml} in the next subsection.

\section{Weak moving lemmas over semi--local PIDs}

In this setion, we imitate Levine's proof of the weak moving lemma:

\begin{theorem}[weak moving lemma over $R$]\label{thm:wml}
Let $X/R$ be smooth of pure relative dimension $d$, let $N\ge0$, and let $G=\mathbb G_{a,R}^N$ act on $\A^N_R$ by translation. Let $s$ be a finite set of closed subschemes of $X$, flat over $R$, with $X\in s$. Fix a set of subschemes $y=\{X\times H_1,\dots,X\times H_l\}$ with each $H_j\subseteq\A^N_R$ a nonempty closed subscheme flat over $R$. Then the inclusions
$$z^q_{y\cup p^* s}(X\times\A^N,\bullet)\ \hookrightarrow\ z^q_{p^* s}(X\times\A^N,\bullet),\;\;\;
z^q_{y\cup p^* s}(X\times\A^N,\bullet)^c\ \hookrightarrow\ z^q_{p^* s}(X\times\A^N,\bullet)^c$$
are quasi-isomorphisms.
\end{theorem}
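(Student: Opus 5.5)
The argument follows Levine's proof over a field \cite[Proposition 4.4]{L1} (and Bloch's \cite[Lemma 2.3]{B}), with the translation action of $G$ doing the moving. Over a field $k$ one uses the action map
\[
\mu: G\times X\times\A^N\times\Box^n\to X\times\A^N\times\Box^n .
\]
For a cycle $Z$ in $z^q_{p^*s}(X\times\A^N,n)$, the pullback $\mu^*Z$ is a cycle on $G\times X\times\A^N\times\Box^n$. By generic translation (Kleiman-type transversality for the transitive $G$-action on $\A^N$), its fiber over the generic point of $G$ meets $X\times H_j\times F$ properly for every $j$ and every face $F$. This produces $Z_\eta$ over $k(G)$, i.e. over the function field of $G$.

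Levine then shows two things. First, the translated cycle is homotopic to $Z$: restricting to the line joining $0$ and $g$ in $G$ gives the homotopy, compatibly with faces. Second, one descends from $k(G)$ back to $k$, either by a norm/transfer argument when $k$ is finite (base change to extensions of coprime degrees) or by specialising at a $k$-point when $k$ is infinite. Together these make the relative complex $z_{p^*s}/z_{y\cup p^*s}$ acyclic. I would copy this structure over $R$, and it applies equally to the simplicial and the cubical complex: only the face combinatorics change, and the homotopies are produced by the prism $W^X_n$ of Lemma \ref{lem:Wn} in the cubical case and the simplicial prism otherwise.

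\textbf{Generic translate.} Fix a finite subcomplex generated by finitely many cycles $Z_i$ and their faces. Set $\mathcal G=G_R$ and consider $\mu^*Z_i$ on $\mathcal G\times_R X\times\A^N\times\Box^n$. I would pass to the local ring of $\mathcal G$ at the generic points of the closed fibers $\mathcal G_{k_j}$. This ring $\mathcal O$ is a semi-local PID with fraction field $K(G)$ and residue fields $k_j(G)$, all infinite. Properness of intersection can then be checked fiberwise: a flat $R$-cycle meets $X\times H\times F$ properly if it does so in the generic fiber and in each closed fiber, since the relevant schemes are flat over $R$. On each fiber, Levine's field argument applies to the restriction of $\mu^*Z_i$, viewed as a cycle over the residue field $k_j(G)$. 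Hence the translated cycle over $\mathcal O$ lies in $z_{y\cup p^*s}$. The cycles $\mu^*Z_i$ restricted to the fibers over $k_j(G)$ are exactly the translates by the generic point of $G_{k_j}$, because $\mathcal G$ is smooth with geometrically integral fibers.

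\textbf{Homotopy.} Use the map $\A^1_{\mathcal O}\to \mathcal G_{\mathcal O}$, $t\mapsto t\cdot g_{\mathrm{gen}}$, and the prism construction to build a homotopy between the base change of $Z$ to $\mathcal O$ and its generic translate. Properness of intersections for the intermediate cycles again reduces to a fiberwise check, the relevant schemes being $R$-flat. This shows that the map $H_*(z_{p^*s}/z_{y\cup p^*s})(R)\to H_*(\cdots)(\mathcal O)$ is zero.

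\textbf{Descent from $\mathcal O$ to $R$.} This is the step I expect to be the main obstacle. In the infinite residue field case, the translation parameter is a finite amount of data avoiding a proper closed "bad" locus in each fiber $\mathcal G_{k_j}$ and in $\mathcal G_K$. Each bad locus is nowhere dense in its fiber. Choose $g_j\in k_j^N$ avoiding the bad locus in $\mathcal G_{k_j}$, and lift them simultaneously to some $g\in R^N$ by the Chinese remainder theorem. A general lift also avoids the bad locus in $\mathcal G_K$: the lifts form a coset of $(\fp_1\cdots\fp_r)^N$, which is Zariski dense in $\A^N_K$. We can therefore specialise at the section $g$, replacing $\mathcal O$ by $R$, provided specialisation commutes with properness, which needs equidimensionality of fibers. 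In the finite residue field case, the fields $k_j$ are finite and such points need not exist. I would instead take finite étale extensions $R'/R$ of coprime degrees $\ell^a$ and $\ell'^b$, chosen with all residue fields large enough, for instance $R\otimes W(\mathbb F_{\ell^a})$-type unramified extensions. After base change the injectivity argument above applies. Pushforward along the extension composed with pullback is multiplication by the degree, and the two degrees are coprime, so the relative homology, which is killed after each base change, is killed over $R$. The delicate point throughout is making sure that flat pullback and proper pushforward along these semi-local extensions preserve the $y\cup p^*s$ conditions fiber by fiber.
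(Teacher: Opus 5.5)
Your proposal is correct and follows the paper's proof almost step for step. Your semi-local ring $\mathcal O$ of $\mathcal G$ at the generic points of the closed fibers is exactly the paper's $R'=R(t_1,\dots,t_N)$. Your homotopy along the line to the generic translate is the paper's $h_n$ built from $\psi(\lambda)=(1-\lambda)\underline t$. Your Chinese-remainder specialisation is the paper's injectivity step; your density argument for the coset $g+(\fp_1\cdots\fp_r)^N$ in $\A^N_K$ in fact supplies the proof of Lemma \ref{lem:lifting} that the paper omits.

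The one real divergence is the finite residue field case. The paper passes to the infinite tower $R_\infty=\varinjlim_n R_n$ and proves it is again a semi-local PID (Lemma \ref{lem:pid}). This lets the infinite-residue-field case apply uniformly as a black box. It then uses that cycle complexes commute with filtered colimits to see a given class die at some finite $R_n$.

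You stop instead at a single finite \'etale $R'$ with ``large enough'' residue fields. This works and spares you Lemma \ref{lem:pid} and the colimit argument, but only class by class, so make the order of choices explicit:
\begin{enumerate}[(i)]
\item first produce the bounding chain $W$ over $R(\underline t)$, which is possible for any $R$ because the $k_i(\underline t)$ are infinite;
\item spread $W$ out over a fiberwise dense open $\mathcal U\subseteq\A^N_R$;
\item only then choose $a$ so large that each $\mathcal U_{k_i'}$ has a rational point, using that a nonzero polynomial of degree $<q'$ cannot vanish on all of $\mathbb F_{q'}^N$.
\end{enumerate}
This yields $\ell^{a(\alpha)}\alpha=0=\ell'^{\,b(\alpha)}\alpha$ for each class $\alpha$, which suffices. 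Your argument does not show that a fixed finite $R'$ kills all of $H_\bullet(C_{R'})$, so your sentence ``the relative homology, which is killed after each base change'' must be read per class. That uniformity is exactly what the paper's passage to $R_\infty$ buys.

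Finally, ``$R\otimes W(\mathbb F_{\ell^a})$'' is not the extension you want. It has degree $a$ rather than $\ell^a$, and it has nothing to do with the residue fields $k_i$ unless they have characteristic $\ell$. Take instead $R[x]/(f)$ with $f$ a monic Chinese-remainder lift of irreducible polynomials of degree $\ell^a$ over the finite $k_i$, as in the paper's tower lemma.
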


\begin{remark}
The same statement for a general group $G$ acting on $X$ holds by the same method, but is not needed here.
\end{remark}

Set $M=X\times\A^N\times\Box^n$ (or $X\times\A^N\times\Delta^n$), smooth over $R$ of pure relative dimension $D=d+N+n$, so that $\dim M_K=\dim M_{k_i}=D$ and $\dim M=D+1$. We prove the cubical case; the simplicial case is identical.

First, note that any integral closed subscheme $W\subseteq M$ either is \emph{horizontal}, i.e. dominates $\mathrm{Spec}\,R$ and is thus flat over it, or maps to a closed point $\fp_i$ (is \emph{vertical} at $\fp_i$). Any cycle thus decomposes uniquely as $Z=Z^h+\sum_i Z^{v,i}$, and this decomposition is preserved by the translations $t_g$, $g\in G(R')$. A simple counting of dimensions tells us that a codimension-$q$ cycle $Z$ on $M$ meeting $C$ properly in $M$ reduces to checking separately that $Z^h$ meets $C_K$ properly in each of the fibers over $K$ and $k_i$, and that for each $i$, $Z^{v,i}$ meets $C_{k_i}$ properly in $M_{k_i}$.

To fix a translation over $R$ generic enough at all fibers, we will use the following immediate consequence of the Chinese remainder theorem: 

\begin{lemma}\label{lem:lifting}
Suppose all $k_i$ are infinite. Let $V\subseteq\A^N_K$ and, for each $i$, $V_i\subseteq\A^N_{k_i}$, be dense open subschemes. Then there is $\underline a\in R^N=\A^N(R)$ with generic fiber $\underline a_K\in V$ and reductions $\underline a_{k_i}=\underline a\bmod\fp_i\in V_i$ for all $i$.
\end{lemma}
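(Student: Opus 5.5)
We need an $\underline a\in R^N$ whose image lies in $V$ over $K$ and in $V_i$ over each $k_i$. The plan is to reduce each condition to the non-vanishing of a single polynomial, satisfy the residue-field conditions one prime at a time, glue these choices using the Chinese remainder theorem, and then perturb to handle the generic fibre.

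\textbf{Step 1: reduce to polynomials.} Since $\A^N_K$ is irreducible, the dense open $V$ contains a nonempty basic open set $\{f\neq 0\}$ with $f\in K[x_1,\dots,x_N]$ nonzero. Clearing denominators, we may take $f\in R[x_1,\dots,x_N]$. Similarly, $V_i$ contains $\{g_i\neq0\}$ for some nonzero $\bar g_i\in k_i[x_1,\dots,x_N]$. It therefore suffices to find $\underline a\in R^N$ with
\[
f(\underline a)\neq 0 \text{ in } K, \qquad \bar g_i(\underline a \bmod \fp_i)\neq 0 \text{ in } k_i \text{ for all } i.
\]

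\textbf{Step 2: fix the residues.} Since $k_i$ is infinite, a nonzero polynomial over $k_i$ does not vanish on all of $k_i^N$. So we can pick $\underline b_i\in k_i^N$ with $\bar g_i(\underline b_i)\neq0$. The ideals $\fp_i$ are distinct maximal ideals, so the Chinese remainder theorem makes $R\to\prod_i k_i$ surjective. Applying this coordinatewise gives $\underline a_0\in R^N$ with $\underline a_0\equiv \underline b_i \bmod \fp_i$ for every $i$.

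\textbf{Step 3: handle the generic fibre.} Let $I=\fp_1\cdots\fp_r$. Since $R$ is a domain that is not a field, $I\neq 0$ (if $r=0$, then $R=K$ and Step 2 is vacuous). Every point $\underline a_0+\underline c$ with $\underline c\in I^N$ has the same reductions as $\underline a_0$, so it still satisfies the conditions in $k_i$.

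It remains to choose $\underline c\in I^N$ with $f(\underline a_0+\underline c)\neq0$. The ideal $I$ is infinite: it contains $\pi R$ for a nonzero $\pi$, and $R$ is infinite because it surjects onto the infinite field $k_1$. Now $h(\underline c)=f(\underline a_0+\underline c)$ is a nonzero polynomial over the infinite domain $K$. A nonzero polynomial cannot vanish on $S^N$ for an infinite subset $S\subseteq K$; this follows by induction on $N$. Taking $S=I$ gives the required $\underline c$.

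\textbf{Main obstacle.} Step 3 is the only real point: the perturbation fixing the generic fibre must not disturb the reductions. Restricting the perturbation to the infinite ideal $I^N$ handles this.
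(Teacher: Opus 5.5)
Your proof is correct and is essentially the argument the paper leaves implicit when it calls the lemma an immediate consequence of the Chinese remainder theorem: CRT fixes the residues, and the coset $\underline a_0+I^N$ is Zariski-dense in $\A^N_K$ because $I$ is infinite, so it meets $V$. The only nit is your parenthetical about $r=0$: if $R$ were a field, one would have $r=1$ and $\fp_1=0$, so $I=0$ and Step 3 does not apply, but that degenerate case is settled trivially by choosing a point of $V\cap V_1$ over the infinite field $K$.
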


\subsection{Proof of Theorem~\ref{thm:wml} when all $k_i$ are infinite}

We will need the following transcendental extension of $R$ to make our genericity argument work: let $S=R[t_1,\dots,t_N]$ and let $R'=R(t_1,\dots,t_N)$ be the localization of $S$ at the multiplicative set $S\setminus\bigcup_i\fp_iS$. Then $R'$ is a semi-local PID, flat over $R$, with maximal ideals $\fp_iR'$, residue fields $k_i(t_1,\dots,t_N)$ and fraction field $K(t_1,\dots,t_N)$; the parameters $t_j$ are transcendental over $K$ and their residues are transcendental over each $k_i$. Write $\pi:\mathrm{Spec}\,R'\to\mathrm{Spec}\,R$ for the projection.

We may now establish the theorem in the infinite residue field case: set 
\[
C=z^q_{p^* s}(X\times\A^N,\bullet)^c\big/\,z^q_{y\cup p^* s}(X\times\A^N,\bullet)^c\]
and let $C'$ be the analogous construction for the base change to $R'$. We will show that $C$ is acyclic by that the pullback $\pi^*: C\to C':=C\otimes_R R'$ is both null-homotopic and an injection on homology.

Let $\psi:\A^1_{R'}\to G_{R'}=\mathbb G_{a,R'}^N$, $\psi(\lambda)=(1-\lambda)\underline t$, so $\psi(1)=0$ is the identity translation and $\psi(0)$ is translation by the generic vector $\underline t$; set $\phi(z,\lambda)=(\psi(\lambda)\cdot z,\lambda)$. The operators $h_n$ of \cite[(4.2)]{L1}, which make equal sense over $R$, define a chain homotopy
$$d\,h_n+h_{n-1}\,d=\pi^*-\psi(0)_*\circ\pi^*,$$
provided each term of $h_n(Z)$ - namely $Z\times\A^1$, $\phi(Z\times\A^1)$, $W^X_n(dZ\times\A^1)$ and $W^X_n(\phi(dZ\times\A^1))$ - meets the faces properly over $R'$. As noted previously, this can be checked separately over each of the fibers, which are copies of the relevant ambient space over the infinite fields $K(\underline t)$ and $k_i(\underline t)$, over each of which $\psi(0)=\underline t$ is the generic translation. Thus, this follows by the same $k$-genericity argument of \cite[Lemma 4.2]{L1}. The same argument shows that $\psi(0)_*\pi^* Z$ - a generic translate, fiberwise - additionally meets every $X\times H_j\times F$ (for $F$ a face) properly, so it lies in $z^q_{y\cup p^* s}(X_{R'}\times\A^N)^c$, i.e. $\psi(0)_*\circ\pi^*=0$ in $C'$. Hence $\pi^*$ is null-homotopic and is zero on homology.

It remains to show $ \pi^*$ is injective on homology: let $[Z]\in H_n(C)$ satisfy $\pi^*[Z]=0$, so $\pi^* Z=dW$ in $C'$ for some $W$. Each cycle over $R'$ is defined over a localization $R[\underline t]_f$ with $f\notin\bigcup_i\fp_i R[\underline t]$; spreading $W$ out we obtain a cycle $\mathcal W$ over a dense open $\mathcal U\subseteq\A^N_R$ - namely, the locus of parameters $\underline a$ at which $\mathcal W$ and all the finitely many face-intersections occurring in $dW$ are in proper position relative to the section $X\times\{\underline a\}\times\Box^{n+1}$. This is an open dense condition in each fiber, so by Lemma \ref{lem:lifting} we may find a global point $\underline a\in\mathcal U(R)$; the restriction $\mathrm{sp}_{\underline a}$ to the section visibly commutes with the face maps. Since $\pi^* Z=Z\times_R\A^N$ is the constant family, $\mathrm{sp}_{\underline a}\pi^* Z=Z$, and therefore in $C$
$$Z=\mathrm{sp}_{\underline a}(dW)=d\,(\mathrm{sp}_{\underline a}W),$$
and $[Z]=0$. The result follows.

\subsection{Descending to the finite residue field case} \label{section:descent}

When the residue fields $k_i$ are finite the preceding lifting lemma can fail, and so we have to descend from auxiliary $\ell$-adic towers of extensions, just as Levine does over fields \cite[Lemma 4.3]{L1} to reduce to the infinite case, then use the push-pull formula for transfers.

\begin{lemma}
Fix a prime $\ell$. There exists a nested tower of domains $R\subset R_1 \subset R_2\subset \ldots$ such that each successive cover is \'{e}tale of degree $\ell$ and totally inert at each prime $\fp_i$.
\end{lemma}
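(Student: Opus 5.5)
We construct the tower one step at a time by adjoining a root of a suitable monic polynomial of degree $\ell$. Assume inductively that $R_m$ has been built and is again a semi-local PID. Its maximal ideals $\fp_i R_m$ are the unique primes above $\fp_i$, with residue fields $k_i^{(m)}=\mathbb F_{q_i^{\ell^m}}$ where $q_i=|k_i|$; this holds by total inertness, since each step multiplies residue degrees by $\ell$. Since each $k_i^{(m)}$ is finite, it has a unique extension of degree $\ell$. Choose for each $i$ a monic irreducible polynomial $\bar f_i\in k_i^{(m)}[x]$ of degree $\ell$. Irreducible polynomials of any degree exist over finite fields, and $\bar f_i$ is automatically separable.

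By the Chinese remainder theorem applied coefficientwise to the maximal ideals $\fp_1R_m,\dots,\fp_rR_m$, which are pairwise comaximal, we find a monic $f\in R_m[x]$ of degree $\ell$ with $f\equiv\bar f_i \pmod{\fp_iR_m}$ for all $i$. We then set $R_{m+1}=R_m[x]/(f)$.

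Next we verify the required properties of $R_{m+1}=R_m[x]/(f)$.

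\textbf{Finite free of rank $\ell$.} This is immediate, since $f$ is monic of degree $\ell$.

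\textbf{\'Etale.} The discriminant $\mathrm{disc}(f)\in R_m$ reduces to $\mathrm{disc}(\bar f_i)\neq0$ modulo every maximal ideal. Hence it lies in no maximal ideal of the semi-local ring $R_m$ and is a unit, so $R_{m+1}$ is \'etale over $R_m$.

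\textbf{Totally inert.} The fiber over $\fp_i$ is $R_{m+1}/\fp_iR_{m+1}=k_i^{(m)}[x]/(\bar f_i)$, a field of degree $\ell$. So there is exactly one prime above $\fp_i$, unramified with residue degree $\ell$.

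\textbf{Semi-local PID.} Because $R_{m+1}$ is integral over $R_m$, every maximal ideal of $R_{m+1}$ lies over some $\fp_iR_m$. So the maximal ideals are exactly the $r$ ideals $\fp_iR_{m+1}$, and they are principal. $R_{m+1}$ is a normal noetherian ring of dimension $\le1$, being \'etale over a Dedekind ring.

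\textbf{Domain.} We must check that $R_{m+1}$ is a domain, not a product of domains; this is the one point needing care. Since $R_{m+1}$ is \'etale and finite over the normal domain $R_m$, it is a finite product of normal domains, and each factor is finite flat over $R_m$ and hence surjects onto $\mathrm{Spec}\,R_m$. Each factor therefore contains a maximal ideal over $\fp_1$. There is only one maximal ideal over $\fp_1$, so there is exactly one factor, and $R_{m+1}$ is a domain (with $r\ge1$; if $r=0$ then $R$ is a field and Levine's argument applies). Equivalently, $f$ is irreducible over the fraction field because its reduction $\bar f_1$ is irreducible, by Gauss's lemma for the PID $R_m$.

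Being a semi-local Dedekind domain, $R_{m+1}$ is a PID, which completes the induction. The successive inclusions $R_m\subset R_{m+1}$ give the tower, and composites of the steps remain \'etale and totally inert.

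The only real obstacle was ensuring that the chosen lift stays irreducible and \'etale simultaneously at all the primes; the Chinese remainder theorem together with the discriminant argument handles both.
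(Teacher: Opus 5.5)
Your proposal is correct and follows the paper's proof: you use the Chinese remainder theorem to lift monic irreducible degree-$\ell$ polynomials over the residue fields to a single monic $f$, set $R_{m+1}=R_m[x]/(f)$, and induct on $m$. You also supply the verifications the paper leaves implicit, all of which are sound: a unit discriminant gives \'etaleness, irreducibility of $f$ (via Gauss's lemma or the connectedness argument) gives a domain, and a semi-local Dedekind domain is a PID, which lets the induction continue.
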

\begin{proof}
    Noting that an \'{e}tale domain covering $R$ is also a semi-local PID, it suffices by induction to construct the extension $R_1/R$ with the requisite properties. Indeed, we may simply take a monic irreducible polynomial $f_i$ of degree $\ell$ over $k_i$, for each $i$ such that $k_i$ is finite, and use the Chinese remainder theorem to find $f\in R[t]$ whose reductions are each of the $f_i$; then $R_1=R[t]/(f)$ satisfies our requirements. 
\end{proof}

\begin{lemma}\label{lem:pid}
$R_\infty:=\varinjlim_n R_n$ is a semi-local principal ideal domain with maximal ideals $\mathfrak P_1,\dots,\mathfrak P_r$ over $\fp_1,\dots,\fp_r$ with all residue fields infinite.
\end{lemma}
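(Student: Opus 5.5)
The plan is to analyse the finite stages $R_n$ first and then pass to the colimit. Each $R_n$ is a semi-local PID whose maximal ideals are exactly $\fp_iR_n$: the extension is étale, hence unramified, and totally inert, so $\fp_iR_n$ is prime. The residue field of $R_n$ at $\fp_iR_n$ is the degree-$\ell^n$ extension $k_{i,n}$ of $k_i$ from the tower (with $k_{i,n}=k_i$ at the infinite-residue-field primes, if any). For the maximal ideals of $R_\infty$, I would argue as follows. Since $R_\infty$ is integral over $R$, going-up and lying-over show that every maximal ideal contracts to some $\fp_i$. The ideal $\mathfrak P_i:=\fp_iR_\infty=\bigcup_n\fp_iR_n$ is prime, being a union of a chain of primes, and $R_\infty/\mathfrak P_i=\varinjlim_n R_n/\fp_iR_n=\varinjlim_n k_{i,n}$ is a field. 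It is infinite because it contains fields of degree $\ell^n$ over $k_i$ for every $n$. Uniqueness of the prime over $\fp_i$ follows because any prime $\mathfrak Q$ over $\fp_i$ satisfies $\mathfrak Q\cap R_n=\fp_iR_n$ at every finite stage, so $\mathfrak Q=\mathfrak P_i$. Hence $R_\infty$ is semi-local with maximal ideals $\mathfrak P_1,\dots,\mathfrak P_r$. It is a domain, being a directed union of domains, and it is one-dimensional, being integral over $R$.

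To get PID I would show $R_\infty$ is a valuation-type Dedekind ring locally and then use semi-locality. The localization at $\mathfrak P_i$ is $\varinjlim_n (R_n)_{\fp_iR_n}$. Each of these is a DVR in which a uniformizer $\varpi_i$ of $R_{\fp_i}$ remains a uniformizer, since the extensions are unramified. So the union is a local domain whose maximal ideal is generated by $\varpi_i$. Every nonzero element $x$ lies in some $(R_n)_{\fp_i}$ and can be written $x=u\varpi_i^m$ with $u$ a unit, and this expression is compatible along the tower. Therefore $(R_\infty)_{\mathfrak P_i}$ is a DVR with uniformizer $\varpi_i$. In particular it is noetherian, which is the key point, since a general direct limit of noetherian rings need not be noetherian.

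From here, $R_\infty$ is a one-dimensional domain with finitely many maximal ideals, each with DVR localization, so it is a semi-local Dedekind domain. Noetherianity follows because every nonzero ideal is contained in only finitely many maximal ideals and is locally principal. To avoid the general theory, I would instead exhibit generators directly. By the Chinese remainder theorem in $R$, choose $\pi_i\in R$ with $\pi_i\in\fp_i\setminus\fp_i^2$ and $\pi_i\notin\fp_j$ for $j\ne i$. Then $\mathfrak P_i=\pi_iR_\infty$, since this holds locally at every maximal ideal. A nonzero $x\in R_\infty$ then equals $u\prod_i\pi_i^{v_i(x)}$ with $u$ a unit locally everywhere, hence a unit. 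So every ideal is generated by a product of the $\pi_i$, namely the minimal valuations over its elements, and $R_\infty$ is a PID.

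The main obstacle is the compatibility of uniformizers and valuations along the tower, which is what makes the local rings of the infinite union DVRs rather than non-noetherian valuation rings. This rests squarely on the étale, unramified hypothesis: $\fp_iR_n$ is generated by the same element for all $n$, so valuations do not grow in the limit.
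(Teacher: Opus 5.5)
Your proof is correct. Its key local input is the same as the paper's: because the tower is unramified, $(R_\infty)_{\mathfrak P_i}=\varinjlim_n (R_n)_{\fp_iR_n}$ is a DVR whose uniformizer already lies in $R$. Where you diverge is the passage from local to global.

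The paper declares $R_\infty$ a semi-local Pr\"ufer domain and picks $a_i\in I$ realizing the minimal valuations $n_i$. It then cites two standard facts: finitely generated ideals of a Pr\"ufer domain are invertible, and invertible ideals of a semi-local domain are principal. This gives $(a_1,\dots,a_r)=(x)$, and the paper finishes with $R_\infty=\bigcap_i (R_\infty)_{\mathfrak P_i}$.

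You instead use the Chinese remainder theorem to choose $\pi_i\in R$ that are uniformizers at $\fp_i$ and units at the other $\fp_j$. You then verify $\mathfrak P_i=\pi_iR_\infty$ locally, and factor every nonzero element as a unit times $\prod_i\pi_i^{v_i(x)}$.

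Your route is more elementary and explicit. It needs no Pr\"ufer theory, and it actually proves that the maximal ideals of $R_\infty$ are exactly the $\fp_iR_\infty$, which the paper asserts without argument. It also gives the sharper conclusion that every ideal of $R_\infty$ is extended from $R$. The paper's route is shorter, at the price of citing those facts.

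One step is left implicit. Your factorization only gives $I\subseteq(\prod_i\pi_i^{n_i})$. For the reverse inclusion, note that both ideals localize to $\mathfrak P_j^{n_j}(R_\infty)_{\mathfrak P_j}$ at every $\mathfrak P_j$, so they are equal. Alternatively, monomials in the $\pi_i$ with disjoint supports are comaximal.

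Finally, drop your parenthetical about infinite-residue-field primes. In this section all $k_i$ are finite. Moreover, the claim $k_{i,n}=k_i$ is inconsistent with $\fp_i$ being inert in a degree-$\ell^n$ \'etale extension, whose residue degree must then be $\ell^n$.
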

\begin{proof}
$R_\infty$ is a one-dimensional normal domain. Each localization $(R_\infty)_{\mathfrak P_i}=\varinjlim_n(R_n)_{\fp_i}$ is also a DVR since the tower is unramified; its residue field is the infinite field $\bigcup_n k_i^{(n)}$. Thus $R_\infty$ is a one-dimensional semi-local Pr\"ufer domain whose localizations are discrete. Given $0\neq I\subseteq R_\infty$, set $n_i=\min\{v_i(x):x\in I\}$ for the valuation $v_i$ of $\mathfrak P_i$, choose $a_i\in I$ with $v_i(a_i)=n_i$, and put $J=(a_1,\dots,a_r)\subseteq I$. As a finitely generated ideal of a Pr\"ufer domain $J$ is invertible, and an invertible ideal of a semi-local domain is principal, $J=(x)$ with $v_i(x)=n_i$. For any $y\in I$ we have $v_i(y/x)\ge0$ for all $i$, so $y/x\in\bigcap_i(R_\infty)_{\mathfrak P_i}=R_\infty$, whence $y\in(x)$ and $I=J=(x)$. Thus every ideal is principal, so $R_\infty$ is a PID.
\end{proof}

By Lemma \ref{lem:pid} and the infinite-residue-field case, the quotient complex $C_{R_\infty}$ is acyclic. The cycle complexes commute with filtered colimits, so $C_{R_\infty}=\varinjlim_n C_{R_n}$ and $H_\bullet C_{R_\infty}=\varinjlim_n H_\bullet C_{R_n}=0$. Let $\alpha\in H_\bullet C_R$; its image in $H_\bullet C_{R_\infty}$ vanishes, so $\pi_n^*\alpha=0$ in $H_\bullet C_{R_n}$ for some $n$, where $\pi_n:\operatorname{Spec}R_n\to\operatorname{Spec}R$. The transfer $\pi_{n*}:C_{R_n}\to C_R$ satisfies $\pi_{n*}\pi_n^*=[R_n:R]\cdot\mathrm{id}=\ell^n\cdot\mathrm{id}$; hence $\ell^n\alpha=\pi_{n*}\pi_n^*\alpha=0$. Doing this for two distinct primes $\ell\neq\ell'$ annihilates $\alpha$ by the coprime integers $\ell^n$ and $\ell'^{\,n'}$, so $\alpha=0$. Hence $H_\bullet C_R=0$. 

As noted, the simplicial part of Theorem~\ref{thm:wml} follows identically.

\section{Comparison statements}

\begin{corollary}\label{cor:homotopy}
The homotopy properties \emph{(HC)} and \emph{(HS)} of Proposition~\ref{prop:reduction} hold over $R$.
\end{corollary}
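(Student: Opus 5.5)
We will deduce (HC) and (HS) from Theorem~\ref{thm:wml} with $N=1$, following Levine's argument deducing \cite[Theorem 4.5]{L1} from \cite[Proposition 4.4]{L1} (and Bloch's \cite[Theorem 2.1]{B} from \cite[Lemma 2.3]{B}). We treat the cubical case (HC); the simplicial case is identical, with the simplicial prism in place of $W^X_n$. Let $p:X\times\A^1\to X$ be the projection and $i_0:X\to X\times\A^1$ the zero section. Since $p\circ i_0=\mathrm{id}_X$, we will show that $p^*$ is a quasi-isomorphism by exhibiting $i_0^*$ as a homotopy inverse on a suitable subcomplex.

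The first step is to make $i_0^*$ well defined. We apply Theorem~\ref{thm:wml} with $N=1$, the given $s$, and $y=\{X\times\{0\}\}$; here $H_1=\{0\}\subseteq\A^1_R$ is a section, so it is flat over $R$. This shows that the inclusion $z^q_{y\cup p^*s}(X\times\A^1,\bullet)^c\hookrightarrow z^q_{p^*s}(X\times\A^1,\bullet)^c$ is a quasi-isomorphism. On the smaller complex, cycles meet $X\times\{0\}\times F$ properly for every face $F$, and likewise $S\times\{0\}\times F$ for $S\in s$. We should double-check that the theorem's hypothesis can be taken to include $S\times H_j$ as well as $X\times H_j$, or else enlarge $y$ to $\{S\times\{0\}\}_{S\in s}$; the proof of Theorem~\ref{thm:wml} only uses translation in the $\A^N$ factor, so either form goes through. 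Under these conditions, intersection with the regular embedding $X\times\{0\}\subset X\times\A^1$ defines a chain map $i_0^*:z^q_{y\cup p^*s}(X\times\A^1,\bullet)^c\to z^q_s(X,\bullet)^c$ commuting with the face maps. Moreover $p^*$ lands in this subcomplex, because $p^*Z=Z\times\A^1$ meets $X\times\{0\}\times F$ exactly as $Z$ meets $X\times F$. We then have $i_0^*p^*=\mathrm{id}$.

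The remaining step is to show $p^*i_0^*\simeq\mathrm{id}$ on the subcomplex. We use the multiplication map $\mu:X\times\A^1\times\A^1\to X\times\A^1$, $(x,t,\lambda)\mapsto(x,\lambda t)$, and set $H(Z)=W^X_n\big((\mu^*Z)|_{\lambda\text{-coordinate as cube coordinate}}\big)$, suitably restricted, exactly as in \cite[Theorem 4.5]{L1}. Lemma~\ref{lem:Wn} then gives the homotopy identity between restriction at $\lambda=1$, which is $\mathrm{id}$, and restriction at $\lambda=0$, which is $p^*i_0^*$.

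The main obstacle is checking that the terms of this homotopy meet all faces and all $S\in p^*s$ properly over $R$ rather than over a field. Here $\mu$ is not flat at $\lambda=0$, so some properness must be imposed. This is why we pass first to the subcomplex of cycles that are good with respect to $X\times\{0\}$, possibly after shrinking further to cycles meeting the fibre $\{\lambda=0\}$ properly, which is again allowed by Theorem~\ref{thm:wml}. To verify properness we use the fibrewise criterion from the previous section: a cycle is proper over $R$ iff its horizontal part is proper on the generic fibre and on each special fibre, and each vertical part is proper in its special fibre. Over each of the fields $K,k_i$ the verification is then literally Levine's. Combining, $p^*$ is a quasi-isomorphism onto the subcomplex, and the subcomplex is quasi-isomorphic to the whole complex, which gives (HC). (HS) follows in the same way.
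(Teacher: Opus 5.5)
\textbf{Where your route matches the paper.} Your overall plan is the paper's. You use Theorem~\ref{thm:wml} with $N=1$ to pass to a subcomplex on which $i_0^*$ is defined, and then run Levine's argument: $i_0^*p^*=\mathrm{id}$ plus a multiplication-map homotopy. The paper takes $H_1=\{0\}$, $H_2=\{1\}$ in order to follow Levine verbatim; $\{0\}$ alone is enough for your homotopy.

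Your remark that $Z$ must meet $S\times\{0\}\times F$ properly for every $S\in s$, not just $X\times\{0\}\times F$, so that $i_0^*$ lands in $z^q_s(X,\bullet)^c$, is correct. It is not literally covered by the statement of Theorem~\ref{thm:wml}, which only puts $X\times H_j$ into $y$. Since $S\times\A^N$ is translation-stable, the generic-translate argument does supply it.

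\textbf{The gap.} The real problem is the step you single out as the main obstacle. The fibrewise criterion is only a \emph{sufficient} condition. Good position over $R$ does not pass to the special fibres of horizontal components, so you cannot feed Levine's field argument fibre by fibre.

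For example, take $X=\mathrm{Spec}\,R$ with uniformizer $\pi$ and residue field $k$. The cycle $Z=\{t=\pi\}\subset\A^1_R$ meets $\{t=0\}$ properly, in a closed point, but $Z_k=\{t=0\}$ does not. Correspondingly, $\mu^*Z=\{t\lambda=\pi\}$ meets $\{\lambda=0\}$ properly over $R$, in the $t$-line of the special fibre. Yet $(\mu^*Z)_k=\{t\lambda=0\}$ contains $\{\lambda=0\}$.

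\textbf{The fix: check properness directly over $R$.}
\begin{itemize}
\item Contrary to what you state, $\mu$ is flat. Indeed $R[t,\lambda]$ is free over $R[t\lambda]$ on $\{t^a\}_{a\ge0}\cup\{\lambda^b\}_{b\ge1}$.
\item Flat pullback preserves codimension, so $\mu^*Z$ meets $\mu^{-1}(S\times\A^1\times F)=S\times\A^1\times\A^1\times F$ properly.
\item The face $\lambda=1$ of $\mu^*Z$ is $Z$.
\item The face $\lambda=0$ is $p^*i_0^*Z$. Its intersection with $S\times\A^1\times F$ is set-theoretically $\A^1\times\bigl(Z\cap(S\times\{0\}\times F)\bigr)$. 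This has the right codimension precisely because $Z$ meets $S\times\{0\}\times F$ properly, so no further shrinking is needed.
\end{itemize}

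\textbf{The homotopy formula.} $W^X_n$ plays no role here. Once $\lambda$ is a cube coordinate, $\mu^*Z$ already lives over $\Box^{n+1}$, so applying $W$ would land in degree $n+2$. Also, Lemma~\ref{lem:Wn} says nothing about the faces $\lambda=0,1$.

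The homotopy is simply $H_n(Z)=\pm\mu^*Z$, with $\lambda$ as the $(n+1)$-st cube coordinate. We have $\partial^\epsilon_j\mu^*=\mu^*\partial^\epsilon_j$ for $j\le n$, $\partial^1_{n+1}\mu^*Z=Z$, and $\partial^0_{n+1}\mu^*Z=p^*i_0^*Z$. Hence $dH+Hd=\pm(\mathrm{id}-p^*i_0^*)$. The same codimension count shows that $H$ preserves degenerate cycles and the good-position subcomplex.
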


\begin{proof}
Apply Theorem~\ref{thm:wml} with $\A^N=\A^1$ and $H_1=\{0\}$, $H_2=\{1\}$. The proofs of \cite[Theorem 4.5]{L1} in the cubical case and \cite[Theorem 2.1]{B} in the simplicial case then go through without change.
\end{proof}

We immediately deduce our main theorem:

\begin{theorem}\label{thm:main}
Let $R$ be a semi-local principal ideal domain where all residue fields are either finite or infinite, and $X$ a smooth $R$-scheme of pure relative dimension $d$. Then for every $q$ we have a zig-zag of quasi-isomorphisms
$$z^q(X,\bullet)^c \xleftarrow{\epsilon'} \mathrm{Tot}_X \xrightarrow{\epsilon''} z^q(X,\bullet)$$
and in particular $\mathrm{CH}^q(X,n)^c\cong\mathrm{CH}^q(X,n)$ for all $n,q$. More generally, the same holds for $z^q_s(X,\bullet)$ for any finite family $s$ of closed subschemes flat over $R$ with $X\in s$.
\end{theorem}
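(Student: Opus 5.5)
The plan is to assemble the theorem from Proposition~\ref{prop:reduction} and Corollary~\ref{cor:homotopy}. Since the case $s=\{X\}$ is a special case of the general statement, I will prove the statement for an arbitrary finite family $s$ of $R$-flat closed subschemes with $X\in s$. By Proposition~\ref{prop:reduction}, $\epsilon'$ and $\epsilon''$ are quasi-isomorphisms once the rows $(z^q_s(X,\bullet,n),d')$ and the columns $(z^q_s(X,m,\bullet),d'')$ are acyclic for $m,n\ge1$. The isomorphism $\mathrm{CH}^q(X,n)^c\cong \mathrm{CH}^q(X,n)$ then follows by taking homology of the zig-zag.

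For acyclicity I will follow \cite[Lemma 4.6]{L1}. Consider a row with fixed simplicial degree $n\ge1$. Identify $z^q_s(X,\bullet,n)$ with a cubical complex on $X\times\Delta^n$, in general position with respect to the family $s\times\{\text{faces of }\Delta^n\}$. Each such face is flat over $R$, since it is a product of an $R$-flat $S\in s$ with a face of $\Delta^n$, which is an affine space over $R$. Since $\Delta^n\cong\A^n_R$ is an affine space, iterating (HC) $n$ times shows that pullback from $X$ is a quasi-isomorphism onto the complex on $X\times\A^n$ with the pulled-back conditions. The extra face conditions $\{t_i=0\}$ are exactly of the form $X\times H_j$ with $H_j\subseteq\A^n_R$ closed and $R$-flat. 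Theorem~\ref{thm:wml} therefore removes them up to quasi-isomorphism. The upshot is that the row is quasi-isomorphic to the complex obtained by ``contracting'' $\Delta^n$. Levine then uses the degeneracy and face structure: for $n\ge1$, the alternating face maps $d''$ act on these identified complexes in a way that forces the column-wise homology to vanish. The columns are handled symmetrically, using (HS), $\Box^m\cong\A^m_R$, and the faces $\{t_i=0\},\{t_i=1\}$, which again are $R$-flat linear subspaces.

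The key point to verify is that every hypothesis Levine uses is available over $R$. These are (HC) and (HS) for the enlarged families $p^\bullet s$, which Corollary~\ref{cor:homotopy} supplies for arbitrary $s$. They also include the flatness over $R$ of all the auxiliary subschemes fed into Theorem~\ref{thm:wml}, and the existence of the homotopies built from $W_n^X$ (Lemma~\ref{lem:Wn}) and the simplicial prism, which are defined over $\Z$. Dimension bookkeeping under the horizontal/vertical decomposition reduces properness of intersection to the fibers, exactly as in the proof of Theorem~\ref{thm:wml}.

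I expect the main obstacle to be making sure that the families of subschemes appearing in the inductive argument remain flat over $R$ and finite, so that Theorem~\ref{thm:wml} and Corollary~\ref{cor:homotopy} genuinely apply. I would check this first by listing the subschemes explicitly: they are products of members of $s$ with faces of cubes and simplices. Once this is done, the spectral sequence argument of \cite[Theorem 4.7]{L1} (Proposition~\ref{prop:reduction}) finishes the proof verbatim. Finally, the residue-field hypothesis enters only through Theorem~\ref{thm:wml}, which covers both the all-infinite and the all-finite case.
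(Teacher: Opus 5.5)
Your proposal is correct and is essentially the paper's proof. Corollary~\ref{cor:homotopy} supplies (HC) and (HS) over $R$, which are exactly what \cite[Lemma 4.6]{L1} needs for the acyclicity of rows and columns, and Proposition~\ref{prop:reduction} then gives the zig-zag of quasi-isomorphisms and hence $\mathrm{CH}^q(X,n)^c\cong\mathrm{CH}^q(X,n)$. Your extra sketch of \cite[Lemma 4.6]{L1} is a bit loose: a row with $n\ge1$ is quasi-isomorphic to $z^q_s(X,\bullet)^c$ rather than literally acyclic, and the vanishing for $n\ge1$ only appears through the alternating face maps or normalization. Since the paper also imports that lemma from \cite{L1} without reproving it, this does not affect the argument.
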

\begin{proof}
By Proposition \ref{prop:reduction}, the rows and columns of the double complex are acyclic in positive degree.
\end{proof}

Let $B$ be a Dedekind domain. The various Bloch cycle groups sheafify over $B$ by Zariski descent of algebraic cycles. Then the zig-zag of maps
$$z^q(X,\bullet)^c \xleftarrow{\epsilon'} \mathrm{Tot}_X \xrightarrow{\epsilon''} z^q(X,\bullet)$$
sheafifies into a zig-zag of maps between the Zariski sheaves on $B$
 \[
    U\mapsto z^q(X_U,\bullet)^c,\;\;\;  U\mapsto z^q(X_U,\bullet),
 \]
which is a quasi-isomorphism stalk-wise (which are DVRs). These thus compute the same hypercohomology groups; the latter were described by Levine in \cite{L2}, and many properties established. These compute the same motivic cohomology groups as modern Voevodsky-style definitions of motivic cohomology for smooth schemes over Dedekind bases. If $B$ is semi-local, then the hypercohomology of the cubical Bloch sheaf is computed by the global sections $z^q(X_B,\bullet)^c$, by Theorem \ref{thm:main}.

{
\printbibliography

@article{B,
  author={Bloch, Spencer}, title={Algebraic cycles and higher {$K$}-theory},
  journal={Adv. Math.}, volume={61}, number={3}, year={1986}, pages={267--304}, shorthand="Bl"}

@article{L1,
  author={Levine, Marc}, title={Bloch's higher {C}how groups revisited},
  journal={Ast\'erisque}, volume={226}, year={1994}, pages={235--320},
  note={$K$-theory (Strasbourg, 1992)},shorthand="L1"}

@article{L2,
  author  = {Marc Levine},
  title   = {Techniques of localization in the theory of algebraic cycles},
  journal = {Journal of Algebraic Geometry},
  volume   = {10},
  number   = {2},
  year     = {2001},
  pages    = {299--363},
  shorthand = "L2"
}

@unpublished{X,
  author = {Peter Xu},
  title = {Symbols for toric Eisenstein cocycles and arithmetic applications},
  note = "Submitted.",
  shorthand = "X"
}

@article{G,
  author={Geisser, Thomas}, title={Motivic cohomology over {D}edekind rings},
  journal={Math. Z.}, volume={248}, number={4}, year={2004}, pages={773--794}, shorthand="G"}
}

\end{document}